\date{\today}
\theoremstyle{plain}
\newtheorem{theorem}{Theorem}[section]
\newtheorem{lemma}[theorem]{Lemma}
\newtheorem{claim}[theorem]{Claim}
\newtheorem{observation}[theorem]{Observation}
\newtheorem{corollary}[theorem]{Corollary}
\newtheorem{problem}[theorem]{Problem}
\newtheorem{remark}[theorem]{Remark}
\title{The probability of selecting $k$ edge-disjoint Hamilton cycles in the complete graph}
\author{ Asaf Ferber \thanks{Department of Mathematics, University of California, Irvine. Email: {\tt asaff@uci.edu}. Research is partially supported by an NSF grant DMS-1954395.} \hspace{10mm} Kaarel Haenni \thanks{Massachusetts Institute of Technology. Department of Mathematics. Email: {\tt kaarelh@mit.edu}} \hspace{10mm} Vishesh Jain \thanks{Massachusetts Institute of Technology. Department of Mathematics. Email: {\tt visheshj@mit.edu}}}
\date{}
\begin{document}

\maketitle

\abstract{Let $H_1,\dots,H_k$ be Hamilton cycles in $K_n$, chosen independently and uniformly at random. We show, for $k = o(n^{1/100})$, that the probability of $H_1,\dots,H_k$ being edge-disjoint is $(1+o(1))e^{-2\binom{k}{2}}$}. This extends a corresponding estimate obtained by Robbins in the case $k=2$.  

\section{Introduction}

A classical problem in elementary combinatorics is to show that the number of derangements of an $n$ element set (recall that a derangement is a permutation with no fixed points) is $(1+o(1))\frac{n!}{e}$. This problem can equivalently be formulated in graph theoretic language as follows: let $K_{n,n}$ be the complete bipartite graph with each part of size $n$.  
Let $M_1$ and $M_2$ be two perfect matchings of $K_{n,n}$, chosen independently and uniformly at random.  
Then, the probability that $M_1\cap M_2=\emptyset$ is $(1+o(1))\frac 1e.$

This formulation immediately suggests the following question: suppose that $M_1,\ldots,M_k$ are $k$ perfect matchings of $K_{n,n}$, each of which is chosen independently and uniformly at random. What is the probability that all of the $M_i$'s are edge-disjoint? Using (nowadays) standard estimates on the permanent of the (bipartite) adjacency matrix of a $d$-regular bipartite graph, one can readily show that (for $k$ which is not too large compared to $n$), the answer to this question is $(1+o(1))e^{-\binom{k}{2}}$ -- we leave this as an exercise for the interested reader.  
Of course, one may ask the same question with perfect matchings replaced by any other graph, and $K_{n,n}$ replaced by some other `ground graph' 

\begin{problem}
Let $H$ be a graph on at most $n$ vertices, and let $G$ be a graph that contains at least one copy of $H$. Let $X_1,\ldots,X_k$, $k\geq 2$, be i.i.d random variables, each of which outputs a copy of $H$ in $G$, distributed uniformly at random. What is the probability $p(G,H,k)$ that all the copies $X_i$ are edge-disjoint? 
\end{problem}

For $G = K_n$ (the complete graph on $n$ vertices), $H = C_n$ (a simple cycle on $n$ vertices, also known as a \emph{Hamilton cycle}), and $k=2$, it was shown in \cite{Rob} using a clever inclusion-exclusion argument that $p(G,H,2) = (1+o(1))e^{-2}$. As in the case of perfect matchings, it is natural to ask for $p(G,H,k)$ for $k > 2$. Unfortunately, it seems rather hard to extend the argument of $\cite{Rob}$ to larger values of $k$ (in fact, even a possible extension to $k=3$ seems quite involved). In this note, using a completely different argument, we resolve this problem for all values of $k$ up to some small polynomial in $n$. Specifically, we prove the following theorem.    




\begin{theorem}
  \label{lemma:edge disjoint ham cycles}
Let $k = o(n^{1/100})$, and let $H_1,\ldots, H_k$ be  Hamilton cycles in $K_n$, each of which is chosen independently and uniformly at random. Then, the probability that all the $H_i$'s are edge disjoint is
$$(1+o(1))e^{-2\binom{k}{2}}.$$
\end{theorem}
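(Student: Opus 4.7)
The plan is to factor the probability by the chain rule, reducing to an avoidance estimate for a single random Hamilton cycle. Setting $G_i := H_1\cup\cdots\cup H_i$, independence of the $H_j$'s gives
\begin{equation*}
\Pr[H_1,\ldots,H_k \text{ pairwise edge-disjoint}] = \prod_{i=2}^{k} p_i, \qquad p_i := \Pr\!\left[E(H_i)\cap E(G_{i-1}) = \emptyset \,\Big|\, H_1,\ldots,H_{i-1}\text{ pairwise edge-disjoint}\right],
\end{equation*}
and each $p_i$ is an average of $\Pr_H[E(H)\cap E(G) = \emptyset]$ over $G$'s distributed as unions of $i-1$ edge-disjoint Hamilton cycles, hence $2(i-1)$-regular with $(i-1)n$ edges.

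The main technical claim I would prove is a uniform \emph{avoidance lemma}: for every graph $G$ on $n$ vertices which is a disjoint union of $\ell$ edge-disjoint Hamilton cycles with $\ell = o(n^{1/100})$,
\begin{equation*}
\Pr_H[E(H)\cap E(G) = \emptyset] = (1+o(1))\, e^{-2\ell},
\end{equation*}
uniformly in $G$ and $\ell$. Given the lemma, each $p_i = (1+o(1))e^{-2(i-1)}$, and multiplying yields $(1+o(1))^{k-1} e^{-2\binom{k}{2}} = (1+o(1)) e^{-2\binom{k}{2}}$; the hypothesis $k = o(n^{1/100})$ is precisely strong enough (given a per-step relative error of $o(n^{-1/100})$) for the accumulated $(1+o(1))^k$ to remain $1+o(1)$.

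To prove the avoidance lemma I would use a switching argument. Letting $a_t := |\{H : |E(H)\cap E(G)| = t\}|$, a standard $2$-switch deletes two edges $v_iv_{i+1}, v_jv_{j+1}$ of $H$ and rejoins via $v_iv_j, v_{i+1}v_{j+1}$ to produce a new Hamilton cycle. Setting up a bipartite counting between cycles with $|H\cap G|=t-1$ (forward side, inserting an edge of $G$ not previously in $H$) and $|H\cap G|=t$ (backward side, removing an edge of $H\cap G$), and carefully accounting for degenerate cases, yields
\begin{equation*}
\frac{a_t}{a_{t-1}} = \frac{2\ell}{t}\,(1 + o(1))
\end{equation*}
with a small multiplicative error. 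Telescoping from $t=0$ up to a cutoff $T = \Theta(\ell \log \ell)$ gives $a_t/a_0 = (1+o(1))(2\ell)^t/t!$, and combining with $\sum_t a_t = (n-1)!/2$ (the tail beyond $T$ being negligible by the same telescoping) delivers $\Pr_H[E(H)\cap E(G) = \emptyset] = a_0/\sum_t a_t = (1+o(1))e^{-2\ell}$.

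The main obstacle is the precise error analysis in the switching. One must count forward and backward switches uniformly, discarding degenerate configurations where the re-splicing creates two cycles instead of one or accidentally introduces an extra edge of $G$, and then show that the relative error in the ratio $a_t/a_{t-1}$ is smaller than $1/k$. The hypothesis $k = o(n^{1/100})$ is what allows all these error budgets --- per switch, across the telescoping over $t$, and across the $k$ chain-rule factors --- to balance out to an overall $1+o(1)$ in the end.
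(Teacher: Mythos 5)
Your reduction via the chain rule to a uniform avoidance lemma for a single random Hamilton cycle is exactly the reduction used in the paper (its Lemma~\ref{lemma:main}), so the novelty of your plan lies entirely in how you propose to prove that lemma: by switchings, rather than by the paper's method of enumerating Hamilton cycles through an equipartition of the vertex set, perfect-matching counts between consecutive parts (Br\'egman for the upper bound, a greedy count for the lower bound), and McDiarmid's inequality over the choice of partition. A switching proof is a legitimate alternative route in principle, but as sketched it has a genuine gap at the step $a_t/a_{t-1} = \frac{2\ell}{t}(1+o(1))$. The double count behind this ratio needs the forward degree of (essentially) every $H \in \mathcal{A}_{t-1}$ to be $2\ell n(1+o(1))$ after degenerate switches are discarded, and this is false: the number of degenerate forward switches at a fixed $H$ can be of the same order as $2\ell n$. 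Concretely, if $n$ is odd, $H = v_1v_2\cdots v_n$, and $G$ contains the ``square'' cycle $\{v_iv_{i+2}\}$, then every switch designed to insert an edge $v_iv_{i+2}$ of $G$ necessarily also inserts a second edge ($v_{i+1}v_{i+3}$ or $v_{i-1}v_{i+1}$) which again lies in $E(G)$, so \emph{every} forward switch from this $H$ is degenerate and its usable forward degree is $0$ rather than $\approx 2n$.

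Such pathological $H$ are of course rare, but proving they are rare \emph{within} $\mathcal{A}_{t-1}$ is the missing ingredient: the easy bound $\sum_{H\in\mathcal{A}_{t-1}}\mathrm{bad}(H) \le \sum_{H}\mathrm{bad}(H)$ loses a factor $|\mathcal H|/a_{t-1}$, which for $t$ in the dominant range $t\asymp \ell$ can be $e^{\Theta(\ell\log\ell)}$ and destroys the estimate; one would need a separate argument (for instance an auxiliary switching that destroys bad configurations) to show that cycles in $\mathcal{A}_{t-1}$ admitting many degenerate switches form a negligible fraction of $\mathcal{A}_{t-1}$. There is also a quantitative slip in your error budget: since the per-ratio errors accumulate over the telescoping up to $t\asymp 2\ell$ before being summed against the Poisson weights, the relative error in each ratio must be $o(1/(k\ell)) = o(1/k^2)$, not merely $o(1/k)$ as you state (still plausible for typical $H$, but it tightens what the degenerate-case analysis must deliver). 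The paper's partition-based argument sidesteps the uniformity problem entirely, because its concentration statement (Claim~\ref{claim: number of edges}) is over the random partition, where a single transposition changes the relevant edge count by at most $2i$ uniformly.
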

\begin{remark}
In order to keep the exposition simple, and since our approach anyway does not seem to work for values of $k$ larger than $\sqrt{n}$, we did not make any effort to optimize the upper bound on $k$ in the above theorem.  
\end{remark}

\subsection{Outline of the proof}
By Bayes' rule, it suffices (see Section~\ref{sec:proof-main} for details) to show that the number of Hamilton cycles in any graph obtained by removing $i$ edge-disjoint Hamilton cycles from $K_n$ is $(1+o(1/k))\cdot e^{-2i}\cdot (n-1)!/2$ -- this is the content of our main technical lemma (Lemma~\ref{lemma:main}). Our proof of this lemma consists of providing an algorithm to generate all Hamilton cycles in a given graph $G$ (see Section~\ref{procedure}), and then using standard estimates on the number of perfect matchings in bipartite graphs (Corollary~\ref{Bregman:cor}, Lemma~\ref{lemma:lower bound on perfect matchings}), as well as standard concentration inequalities (Theorem~\ref{Azuma for permutations}), in order to analyze the number of distinct Hamilton cycles our algorithm can output.

Roughly speaking, our algorithm generates Hamilton cycles as follows: for a sufficiently large integer $\ell$, divide the vertices of $G$ into $\ell$ parts of size $n/\ell$ (for the sake of this discussion, we assume that $n$ is divisible by $\ell$); choose a perfect matching between parts $i$ and $i+1$ for $1\leq i \leq \ell - 1$ to obtain a collection of $n/\ell$ (oriented) paths of length $\ell - 1$, and finally, extend (if possible), this collection of oriented paths to an oriented Hamilton cycle of $G$. In Claim~\ref{claim: number of edges}, we show using a standard concentration of measure argument that most ways to partition the vertices satisfy a certain `goodness' property -- the contribution to our enumeration coming from partitions not satisfying this property is so small that it may be ignored. On the other hand, for partitions satisfying this goodness property, we are able to effectively leverage standard estimates on the number of perfect matchings in bipartite graphs to provide an asymptotically correct estimate of the number of choices available to our algorithm in subsequent steps.

\section{Tools and auxiliary results}

In this section, we collect some tools and auxiliary results to be used in the proof of our main result. 

\subsection{McDiarmid's inequality} 

We will make use of the following concentration inequality due to McDiarmid (see \cite{Mac}, Section 3.2).

\begin{theorem} \label{Azuma for permutations}
Let $S_n$ denote the symmetric group on $n$ elements and let
$f:S_n\rightarrow \mathbb{R}$ be such that $|f(\pi)-f(\pi')|\leq u$
whenever $\pi'$ can be obtained from $\pi$ by a single transposition. If $\pi$ is chosen uniformly at random from $S_n$, then

$$\Pr\left[|f(\pi)-\mathbb{E}(f)|\geq t\right]\leq
2\exp\left(-\frac{2t^2}{nu^2}\right).$$
\end{theorem}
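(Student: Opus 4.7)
My plan is to prove this by Doob martingale decomposition combined with the sharp version of the Hoeffding--Azuma inequality for martingales with bounded conditional range. First, I reveal $\pi$ one coordinate at a time: letting $\mathcal{F}_i = \sigma(\pi(1), \ldots, \pi(i))$, I define the Doob martingale $Z_i := \mathbb{E}[f(\pi) \mid \mathcal{F}_i]$, so that $Z_0 = \mathbb{E}[f(\pi)]$ and $Z_n = f(\pi)$. The proof then reduces to bounding the conditional range of each $Z_i$ (given $\mathcal{F}_{i-1}$) by $u$.

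For this bound I would use a swap coupling. Fix a history $\pi(1) = a_1, \ldots, \pi(i-1) = a_{i-1}$ and any two candidate values $a, a'$ for $\pi(i)$. Conditioned on $\mathcal{F}_{i-1}$ and $\pi(i) = a$, the permutation $\pi$ is uniform on all such extensions; given such a $\pi$, let $j > i$ be the unique index with $\pi(j) = a'$, and let $\pi'$ be obtained from $\pi$ by swapping the entries at positions $i$ and $j$. Then $\pi'$ is uniform on the permutations extending the same history but with $\pi'(i) = a'$, and $\pi, \pi'$ differ by the single transposition $(i\;j)$, so the hypothesis gives $|f(\pi) - f(\pi')| \leq u$. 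Taking expectations yields
$$
\bigl|\mathbb{E}[f(\pi) \mid \mathcal{F}_{i-1}, \pi(i) = a] - \mathbb{E}[f(\pi) \mid \mathcal{F}_{i-1}, \pi(i) = a']\bigr| \leq u,
$$
so conditional on $\mathcal{F}_{i-1}$, the random variable $Z_i$ (and hence $Z_i - Z_{i-1}$) lies in an interval of length at most $u$.

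The proof concludes by applying the version of the Hoeffding--Azuma inequality that gives $\Pr[|Z_n - Z_0| \geq t] \leq 2\exp(-2t^2/(nu^2))$ whenever each martingale difference has conditional range at most $u$. This follows from the standard Hoeffding lemma bound $\mathbb{E}[e^{\lambda(Z_i - Z_{i-1})} \mid \mathcal{F}_{i-1}] \leq \exp(\lambda^2 u^2 / 8)$, iterating across $i = 1, \ldots, n$ using the tower property, applying Markov's inequality, and optimizing $\lambda = 4t/(nu^2)$. The only step requiring real care is the swap coupling --- specifically, verifying that $\pi'$ really is uniformly distributed on its intended conditional distribution --- but this is immediate because the swap is an involution on the set of permutations with a fixed initial segment $\pi(1), \ldots, \pi(i-1)$. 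Everything else is standard martingale machinery, and I anticipate no conceptual obstacles.
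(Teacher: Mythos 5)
The paper does not prove this statement; it is quoted as a known result with a citation to McDiarmid's survey, and your argument is essentially the standard proof given there: the Doob martingale over the coordinates of $\pi$, the swap coupling to bound the conditional range of each increment by $u$, and the Hoeffding--Azuma machinery. Your proof is correct, including the one point that actually requires care for the constants: you invoke the conditional-range form of Hoeffding's lemma (range $\leq u$ gives $e^{\lambda^2u^2/8}$), which yields the exponent $-2t^2/(nu^2)$, rather than the cruder two-sided bound $|Z_i-Z_{i-1}|\leq u$, which would only give $-t^2/(2nu^2)$.
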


\subsection{A procedure to find all Hamilton cycles in a graph $G$}\label{procedure}

In this section, we describe a procedure to find all Hamilton cycles in a given graph $G$. Later, in Section~\ref{subsec:counting}, we will estimate (from below and above) the number of distinct Hamilton cycles that this procedure can output. 

\begin{enumerate}
\item Fix any positive integer $\ell$ (possibly depending on $n$). Let $r=n \mod \ell$, so that $0\leq r\leq \ell-1$. Let $[n]=V_1\cup \ldots\cup V_\ell$ be any partition of $[n]$, where the first $r$ parts have size $t_c:=\left\lceil{\frac{n}{\ell}}\right\rceil$, and the last $\ell-r$ parts have size $t_f:= \left\lfloor{\frac{n}{\ell}}\right\rfloor$.
\item If $r\neq 0$, designate a `root' $v^{\ast}\in V_{r}$. If $r = 0$, designate a `root' $v^{\ast}$ in $V_1$.  
\item For each $j\in [1,r-1]\cup [r+1,\ell-1]$, let $B_{j}:=G[V_j,V_{j+1}]$. If $r\neq 0$, let $B_r:=G[V_r\setminus \{v^{\ast}\},V_{r+1}]$ and $B_\ell:=G[V_\ell\cup \{v^{\ast}\},V_1]$; if $r = 0$, let $B_\ell:=G[V_{\ell},V_1]$.
\item For each $1\leq j\leq \ell-1$, choose a perfect matching $M_j$ of $B_j$, and observe that $\cup_j M_j$  is a collection of $t_c$ vertex disjoint paths, of which $t_f$ have length $\ell$ and $t_c - t_f$ have length $r$ (by the length of a path, we mean the number of vertices in it). Let $\mathcal P:=\{P_1,\ldots,P_{t_c}\}$ denote the obtained collection of paths, and orient each path such that the vertex in $V_1$ is the first vertex. 
\item Finally, using only the edges in $B_{\ell}$ (directed from $V_\ell$ to $V_1$), find (if possible) a rooted, oriented Hamilton cycle in $G$, which is rooted at $v^{\ast}$ and contains all the paths in $\mathcal P$ as oriented segments.
\end{enumerate}

Let $\mathcal{H}_{r,o}(\ell)$ denote the collection of rooted, oriented Hamilton cycles in $G$ obtained by running the above procedure (with some fixed positive integer $\ell$) for all possible choices of partitions in Step 1, all possible choices of the root in Step 2, all possible choices of the perfect matchings in Step 4, and all possible choices of the compatible rooted, oriented Hamilton cycle in Step 5.   


\begin{lemma}
 For every positive integer $\ell$, the collection $\mathcal{H}_{r,o}(\ell)$ contains every rooted, oriented Hamilton cycle of $G$ exactly once. 
\end{lemma}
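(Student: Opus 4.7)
The plan is to exhibit a bijection between (a) the tuples (partition $V_1\cup\cdots\cup V_\ell$, root $v^*$, perfect matchings $M_1,\dots,M_{\ell-1}$, compatible rooted oriented Hamilton cycle from Step~5) over which the multiset $\mathcal{H}_{r,o}(\ell)$ is enumerated and (b) the rooted oriented Hamilton cycles of $G$. Since $\mathcal{H}_{r,o}(\ell)$ is by construction the multiset of procedure outputs as the tuple ranges over all valid choices, this bijection immediately yields the lemma.

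For surjectivity I would fix any rooted oriented Hamilton cycle $H=(v^*=w_0,w_1,\dots,w_{n-1})$ of $G$ and construct a tuple that outputs it, by walking around $H$ from $v^*$ in the given orientation and labelling the vertices cyclically modulo $\ell$. When $r\neq 0$ the labels $V_1,V_2,\dots,V_\ell$ are assigned starting at the first step after $v^*$, so that $v^*$ itself lands in $V_r$; when $r=0$ the cycling begins at $V_1$ already at $v^*$. A direct count shows that the resulting partition has the required part sizes ($t_c$ for the first $r$ parts, $t_f$ for the rest), and the edges of $H$ split into a perfect matching $M_j$ of each $B_j$ for $j=1,\dots,\ell-1$ together with a perfect matching of $B_\ell$ whose use in Step~5 closes the paths in $\mathcal P$ back into $H$.

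For injectivity I would show that any tuple whose output is $H$ must coincide with the tuple just constructed. Starting at $v^*$ and following $H$ in its orientation, the architecture of the procedure forces the vertex sequence to visit parts in the pattern $V_r \to V_1 \to V_2 \to \cdots \to V_\ell \to V_1 \to \cdots$, with each long path in $\mathcal{P}$ (laid down by $M_1,\dots,M_{\ell-1}$) traversing $V_1,\dots,V_\ell$ in order, each $B_\ell$-edge jumping from $V_\ell$ back to $V_1$, and the short path occurring as the final segment (since its terminal vertex is the root $v^*$); the $r=0$ case is analogous, with no short path and $v^*\in V_1$, so the last edge before returning to $v^*$ is itself a $B_\ell$-edge. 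Hence the part of each vertex of $H$ is determined by its position along $H$, which fixes the partition; the matchings $M_j$ are then recovered as the edges of $H$ between $V_j$ and $V_{j+1}$ (ignoring the edge incident to $v^*$ when $r\neq 0$), and the closing Hamilton cycle in Step~5 is $H$ itself. The main (mild) obstacle is uniformly handling the two boundary cases $r=0$ and $r\neq 0$, which have slightly different indexings because of the root's placement and the presence or absence of the short path; otherwise the argument is direct bookkeeping.
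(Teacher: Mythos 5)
Your proposal is correct and follows essentially the same route as the paper: walking along the cycle from the root in its orientation determines the unique compatible partition, after which the matchings $M_1,\dots,M_{\ell-1}$ and the Step~5 closing edges are forced to be the corresponding edges of $H$. You spell out the surjectivity/injectivity bookkeeping (including the $r=0$ versus $r\neq 0$ cases) more explicitly than the paper does, but the underlying argument is identical.
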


\begin{proof}
 Fix a rooted, oriented Hamilton cycle $H$ in $G$. There is exactly one partition of the vertices in Step 1 compatible with $H$ -- indeed, the root $v^{\ast}$ must belong to $V_{\max\{r,1\}}$, and following the Hamilton cycle from the root along its orientation determines the partition of the vertices. Once this is done, note that the choice of perfect matchings (equivalently, the collection of oriented paths $\mathcal{P}$) in Step 4 is automatically determined by the edges present in the Hamilton cycle. Finally, given this collection of paths, there is exactly one choice of edges in Step 5 which is compatible with $H$.   
\end{proof}

Let $\mathcal{H}(\ell)$ be the collection of Hamilton cycles in $G$, obtained from $\mathcal{H}_{r,o}(\ell)$ by forgetting the root and the orientation. Since for any Hamilton cycle in $G$, there are exactly $n$ ways to choose a root for it, and exactly $2$ ways to orient it, we have:
\begin{observation}
 For every positive integer $\ell$, the collection $\mathcal{H}(\ell)$ contains every Hamilton cycle of $G$ exactly $2n$ times. 
\end{observation}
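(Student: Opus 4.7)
The plan is to read off the observation directly from the preceding lemma together with an elementary count of the symmetries of a Hamilton cycle. Given a positive integer $\ell$ and a Hamilton cycle $H$ in $G$ (viewed without a root or an orientation), I first count the rooted, oriented Hamilton cycles of $G$ that project to $H$ under the map that forgets the root and orientation. Any such lift is determined by specifying one of the $n$ vertices of $H$ as the root together with one of the two possible cyclic orientations, and these $2n$ choices yield pairwise distinct rooted, oriented Hamilton cycles of $G$ (since the root is a distinguished vertex and the orientation then fixes a direction of traversal around the cycle).

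By the preceding lemma, each of these $2n$ rooted, oriented lifts of $H$ appears in $\mathcal{H}_{r,o}(\ell)$ exactly once, and nothing else in $\mathcal{H}_{r,o}(\ell)$ projects to $H$. Applying the forgetful map to obtain $\mathcal{H}(\ell)$ therefore produces exactly $2n$ copies of $H$. Since $H$ was arbitrary, every Hamilton cycle of $G$ is contained in $\mathcal{H}(\ell)$ exactly $2n$ times, which is the desired conclusion. There is no real obstacle here: all the substantive work has been done in the lemma, and this observation is a purely formal bookkeeping corollary that will later convert between the rooted-oriented count (which the algorithm naturally produces) and the unrooted-unoriented count (which is what Lemma~\ref{lemma:main} and ultimately Theorem~\ref{lemma:edge disjoint ham cycles} require).
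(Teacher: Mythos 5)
Your proposal is correct and matches the paper's argument exactly: the paper also derives the observation by noting that each Hamilton cycle has $n$ choices of root and $2$ choices of orientation, hence $2n$ rooted, oriented lifts, each of which appears exactly once in $\mathcal{H}_{r,o}(\ell)$ by the preceding lemma. There is nothing to add.
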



\subsection{The number of perfect matchings in bipartite graphs}

In order to estimate the number of Hamilton cycles our procedure can output, we will need to estimate the number of perfect matchings in `typical' bipartite graphs obtained by our procedure.\\ 

For bounding the number of perfect matchings in a bipartite graph from above, we use the following theorem due to Br\'egman  (see e.g. \cite{AlonSpencer}, page 24) that relates the number of perfect matchings to the vertex-degrees in the graph. 

\begin{theorem}\label{bregman}(Br\'egman's Theorem)
Let $G=(A\cup B,E)$ be a bipartite graph with both parts of the same size. Then, the number of perfect matchings in $G$ is at most
$$\prod_{a\in A} (d_G(a)!)^{1/d_G(a)}.$$
\end{theorem}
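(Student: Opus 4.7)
The plan is to carry out Radhakrishnan's entropy argument. Let $m$ denote the number of perfect matchings of $G$ (which we may assume is positive), so that if $M$ is a uniformly random perfect matching then $H(M)=\log m$, and the desired inequality is equivalent to
$$H(M) \le \sum_{a \in A} \frac{\log(d_G(a)!)}{d_G(a)}.$$
The key device is to couple $M$ with an independent uniformly random permutation $\pi$ of $A$, and to track, for each $a \in A$, the quantity $N_a(\pi,M)$ defined as the number of neighbors $b$ of $a$ such that $M^{-1}(b)$ appears at or after $a$ in $\pi$ --- equivalently, the number of neighbors of $a$ still unmatched when $a$ is processed. Note that $1 \le N_a(\pi, M) \le d_G(a)$ since $M(a)$ itself is always counted.

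The first step is an entropy inequality. For any fixed $\pi$, write $Y_j := M(a_{\pi(j)})$; the chain rule gives
$$H(M) = \sum_{j=1}^{|A|} H\bigl(Y_j \mid Y_1, \dots, Y_{j-1}\bigr).$$
For any realization $(Y_1, \dots, Y_{j-1}) = (y_1, \dots, y_{j-1})$, the support of $Y_j$ is contained in the set of neighbors of $a_{\pi(j)}$ avoiding $\{y_1, \dots, y_{j-1}\}$, and this set has cardinality exactly $N_{a_{\pi(j)}}(\pi, M')$ for any matching $M'$ consistent with the revealed values. Applying the bound $H \le \log|\operatorname{supp}|$ and unconditioning yields
$$H(M) \le \sum_{a \in A} \mathbb{E}_M\bigl[\log N_a(\pi, M)\bigr]$$
for every $\pi$, hence also after averaging over $\pi$.

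The second step is a symmetry observation. For any fixed matching $M$ and any vertex $a$, when $\pi$ is uniform, the relative order of the $d_G(a)$ vertices $\{M^{-1}(b) : b \in N_G(a)\}$ (a set containing $a$ itself) is uniform, so the rank of $a$ among them is uniform on $\{1, \dots, d_G(a)\}$. Since $N_a(\pi, M)$ equals $d_G(a) + 1$ minus this rank, $N_a(\pi, M)$ is itself uniform on $\{1, \dots, d_G(a)\}$, and therefore
$$\mathbb{E}_\pi\bigl[\log N_a(\pi, M)\bigr] = \frac{1}{d_G(a)} \sum_{k=1}^{d_G(a)} \log k = \frac{\log(d_G(a)!)}{d_G(a)}.$$
Combining with the first step and exponentiating yields Br\'egman's bound.

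The main obstacle is the entropy step: one must carefully verify that for any matching $M'$ consistent with $(y_1, \dots, y_{j-1})$, the conditional support of $Y_j$ has size exactly $N_{a_{\pi(j)}}(\pi, M')$, which is what enables the replacement of $\log|\operatorname{supp}|$ by $\log N_{a_{\pi(j)}}(\pi, M)$ and the subsequent averaging over $M$ and $\pi$. The symmetry step is then a direct consequence of the fact that a uniform permutation restricts to a uniform permutation on any fixed subset.
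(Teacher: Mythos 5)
Your proposed argument is Radhakrishnan's entropy proof of Br\'egman's theorem, and it is correct: the chain-rule decomposition along a fixed ordering $\pi$, the bound of each conditional entropy by the logarithm of the number of still-available neighbors $N_a(\pi,M)$ (which is indeed determined by the revealed values $y_1,\dots,y_{j-1}$, so the unconditioning step is legitimate), and the observation that for a uniform $\pi$ the quantity $N_a(\pi,M)$ is uniform on $\{1,\dots,d_G(a)\}$ are all sound; the one degenerate case, a vertex of degree zero, is handled by your standing assumption that at least one perfect matching exists. The paper itself does not prove this theorem at all --- it is quoted as a known result with a citation to Alon and Spencer, where the proof given is Schrijver's probabilistic argument via random orderings and the arithmetic--geometric mean inequality. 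That proof and yours are close cousins (both average over a random ordering of $A$ and exploit that the rank of $a$ among $\{M^{-1}(b): b\in N_G(a)\}$ is uniform), but the entropy formulation you use packages the bookkeeping more cleanly: the chain rule replaces the product-of-counts telescoping, and $H\le \log|\mathrm{supp}|$ replaces the AM--GM step. Either way, supplying a proof here goes beyond what the paper does, and what you wrote is a faithful and complete outline of a standard one; the only place where a fully written version would need care is the verification, which you correctly flag, that the set of neighbors of $a_{\pi(j)}$ avoiding $\{y_1,\dots,y_{j-1}\}$ has size exactly $N_{a_{\pi(j)}}(\pi,M')$ for every consistent $M'$ (this uses that $M'$ is a \emph{perfect} matching, so every neighbor $b$ is matched and $b\in\{y_1,\dots,y_{j-1}\}$ if and only if $M'^{-1}(b)$ precedes $a_{\pi(j)}$).
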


The following is an immediate corollary of Theorem \ref{bregman} and Stirling's approximation.

\begin{corollary}
  \label{Bregman:cor}
  Let $H$ be a spanning subgraph of $K_{m,m}$ with maximum degree at most $D \leq m/2$. Then, the number of perfect matchings in $K_{m,m}\setminus H$ is at most
$$e^{O(D\log{m}/m)}\cdot m!\cdot e^{-|E(H)|/m}.$$
\end{corollary}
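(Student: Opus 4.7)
The plan is to combine Br\'egman's theorem with Stirling's formula. Every $a \in A$ has degree $d(a) := m - d_H(a) \geq m - D \geq m/2$ in $K_{m,m}\setminus H$, so Theorem~\ref{bregman} gives the upper bound $\prod_{a \in A}(d(a)!)^{1/d(a)}$ on the number of perfect matchings of $K_{m,m}\setminus H$. Taking logarithms, it suffices to show
\[
\sum_{a \in A}\frac{\log d(a)!}{d(a)} \;\leq\; \log m! - \frac{|E(H)|}{m} + O\!\left(\frac{D\log m}{m}\right).
\]

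The key step is a term-by-term comparison of $f(d(a))$ with $f(m)$, where $f(n) := (\log n!)/n$. By Stirling's formula, $f(n) = \log n - 1 + \frac{\log(2\pi n)}{2n} + O(1/n^2)$, so for each $a$,
\[
f(m) - f(d(a)) \;=\; \log\frac{m}{d(a)} + \left[\frac{\log(2\pi m)}{2m} - \frac{\log(2\pi d(a))}{2 d(a)}\right] + O(1/m^2).
\]
The first summand is at least $d_H(a)/m$, since $-\log(1-x) \geq x$ on $[0,1)$. The bracketed correction is controlled by the mean value theorem applied to $h(x) := \frac{\log(2\pi x)}{2x}$ on $[d(a),m] \subseteq [m/2, m]$: one has $h'(x) = \frac{1-\log(2\pi x)}{2x^2} = O(\log m / m^2)$ on this interval, so the bracket is of size $O(d_H(a)\log m / m^2)$.

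Summing the resulting pointwise inequality over $a \in A$, and using $\sum_{a} d_H(a) = |E(H)| \leq mD$, gives
\[
\log m! - \sum_{a \in A}\frac{\log d(a)!}{d(a)} \;\geq\; \frac{|E(H)|}{m} - O\!\left(\frac{|E(H)| \log m}{m^2}\right) - O(1/m) \;\geq\; \frac{|E(H)|}{m} - O\!\left(\frac{D \log m}{m}\right),
\]
which exponentiates to the claim. The only point needing real attention is the Stirling correction term: a direct bound on $\sum_a \frac{\log(2\pi d(a))}{2 d(a)}$ would cost a spurious factor of $\log m$, which is much larger than the allowed error, so one must pair it term-by-term with $\frac{\log(2\pi m)}{2m}$ and exploit the cancellation (via the MVT, using $d(a) \geq m/2$) before summing. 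Beyond that, the calculation is purely routine bookkeeping.
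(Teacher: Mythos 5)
Your argument is correct and follows essentially the same route as the paper: Br\'egman's theorem followed by Stirling's formula, with the main term controlled by $-\log(1-x)\ge x$ (the paper's $1-x\le e^{-x}$) to produce the factor $e^{-|E(H)|/m}$. The only difference is bookkeeping for the lower-order Stirling correction --- you pair $\frac{\log(2\pi d(a))}{2d(a)}$ with $\frac{\log(2\pi m)}{2m}$ via the mean value theorem, whereas the paper bounds each factor $(\sqrt{2\pi s})^{1/s}$ uniformly by $(\sqrt{2\pi m})^{1/(m-D)}$ and recombines one copy of $\sqrt{2\pi m}$ with $(m/e)^m$ into $m!$ --- and both ways the cancellation you correctly identify as necessary yields the same $e^{O(D\log m/m)}$ error.
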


\begin{proof} By applying Theorem \ref{bregman} to $G:= K_{m,m}\setminus H$ and using the fact that $s!=(1+O(1/s))\sqrt{2\pi s}\left(\frac{s}{e}\right)^s$, one obtains that the number of perfect matchings in $K_{m,m}\setminus H$ is at most $$\left(\prod_{a\in A}\left(1+O\left(\frac{1}{m-D}\right)\right)^{1/(m-D)}\right)\cdot \left(\prod_{a\in A}(\sqrt{2\pi m})^{1/(m-D)}\right)\cdot \left(\prod_{a \in A}\left(\frac{m-d_H(a)}{e}\right)\right).$$
Using the assumption $D \leq m/2$, the first term can be estimated by:
\begin{align*}
\prod_{a\in A}\left(1+O\left(\frac{1}{m-D}\right)\right)^{1/(m-D)}
&\leq e^{O(m/(m-D)^{2})}\\
&\leq e^{O(1/m)}.
\end{align*}
The second term can be estimated by:
\begin{align*}
    \prod_{a\in A}(\sqrt{2\pi m})^{1/(m-D)} &=
    \sqrt{2\pi m}\cdot (\sqrt{2 \pi m})^{D/(m-d)}\\
    &\leq \sqrt{2\pi m}\cdot e^{O(D\log{m}/(m-D))}\\
    &\leq \sqrt{2\pi m}\cdot e^{O(D\log{m}/m)}.
\end{align*}
The third term can be estimated by:
\begin{align*}
\prod_{a \in A}\left(\frac{m}{e}\cdot\left(1-\frac{d_H(a)}{m}\right) \right)
&\leq \left(\frac{m}{e}\right)^{m}\cdot \prod_{a \in A}\exp\left(-\frac{d_{H}(a)}{m}\right)\\
&\leq \left(\frac{m}{e}\right)^{m}\cdot \exp\left(-\frac{|E(H)|}{m}\right).
\end{align*}
Combining everything, and using Stirling's approximation once again, we get the upper bound:
\begin{align*}
    e^{O(D\log{m}/m)}\cdot \left(\sqrt{2 \pi m}\left(\frac{m}{e}\right)^{m}\right)\cdot e^{-|E(H)|/m}
    &\leq e^{O(D\log{m}/m)}\cdot m!\cdot e^{-|E(H)|/m},
\end{align*}
as desired. \qedhere 
\end{proof}


The next lemma provides a nearly matching lower bound on the number of perfect matchings in `almost complete' balanced bipartite graphs. 
\begin{lemma}
  \label{lemma:lower bound on perfect matchings}
    Let $H$ be a spanning subgraph of $K_{m,m}$  with $|E(H)|<\frac{m}{4}$. Then, the number of perfect matchings in $K_{m,m}\setminus H$ is at least 
  $$\left(1-O\left(\frac{|E(H)|^2}{m^2}\right)\right)\cdot m!\cdot e^{-|E(H)|/m}.$$
\end{lemma}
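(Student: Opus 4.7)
The plan is to prove the lower bound by a short inclusion--exclusion argument, followed by a single-term Bonferroni truncation and a comparison with the exponential. For each edge $e_i\in E(H)$, let $\mathcal A_i$ denote the set of perfect matchings of $K_{m,m}$ containing $e_i$; the number of perfect matchings of $K_{m,m}\setminus H$ is then $N := |\bigcap_i \mathcal A_i^c|$. For any $S \subseteq E(H)$, the intersection $\bigcap_{e\in S}\mathcal A_e$ is empty unless $S$ is itself a matching of $K_{m,m}$, in which case it has size exactly $(m-|S|)!$. Writing $m_k(H)$ for the number of matchings of size $k$ in $H$, inclusion--exclusion yields
$$N = \sum_{k=0}^{|E(H)|}(-1)^k\, m_k(H)\,(m-k)!.$$

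Next, I would apply the Bonferroni inequality truncated at $K=1$---equivalently, the trivial union bound $|\bigcup_i \mathcal A_i|\le \sum_i|\mathcal A_i|$---to obtain
$$N \ge m! - |E(H)|(m-1)! = m!\left(1-\frac{|E(H)|}{m}\right),$$
which requires no information about $m_k(H)$ for $k\ge 2$. To match the target expression I would then invoke the elementary inequality $(1+x)e^{-x}\le 1$ (equivalent to $e^x \ge 1+x$). Multiplying both sides by $1-x$, which is positive because the hypothesis $|E(H)|<m/4$ gives $x := |E(H)|/m < 1/4$, one obtains $(1-x^2)e^{-x}\le 1-x$. Substituting back yields
$$m!\left(1-\frac{|E(H)|}{m}\right) \ge m!\, e^{-|E(H)|/m}\left(1-\frac{|E(H)|^2}{m^2}\right),$$
which is exactly the claimed bound (with the implicit constant in the $O(\cdot)$ equal to $1$).

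The main point worth noting is that no analysis of $m_k(H)$ for $k\ge 2$ is required: the $O(|E(H)|^2/m^2)$ slack allowed by the statement is absorbed entirely by the gap between $1-x$ and $e^{-x}$ near $x=0$, so even the crudest nontrivial Bonferroni truncation suffices. The hypothesis $|E(H)|<m/4$ is used only to guarantee $1-|E(H)|/m>0$ in the last step---in fact any bound of the form $|E(H)|\le (1-\varepsilon)m$ would suffice. Accordingly, there is essentially no obstacle here beyond correctly setting up the inclusion--exclusion.
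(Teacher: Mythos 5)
Your proof is correct, and it takes a genuinely different route from the paper's. The paper constructs matchings greedily: it orders the vertices of $A$ so that the $q$ vertices meeting an edge of $H$ come first, observes that the $i$-th vertex has at least $m-d_i-(i-1)$ available partners, and multiplies these counts to get the lower bound $m!\left(1-\frac{|E(H)|}{m-q}\right)$; it then needs the hypothesis $|E(H)|<m/4$ (via $q\leq |E(H)|$) to trade the denominator $m-q$ for $m$ at the cost of an extra $O(|E(H)|^2/m^2)$ factor. Your union bound over the $|E(H)|$ events $\mathcal A_i$, each of size exactly $(m-1)!$, reaches the cleaner intermediate bound $m!\left(1-\frac{|E(H)|}{m}\right)$ in one line, with no dependence on $q$ and hence no real need for the hypothesis $|E(H)|<m/4$ beyond keeping $1-|E(H)|/m$ nonnegative (note that for $|E(H)|>m$ your final inequality $m!(1-x)\geq m!e^{-x}(1-x^2)$ would actually reverse, so some bound of this kind is still needed). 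Both arguments finish with essentially the same elementary comparison $1-x\geq e^{-x}(1-x^2)$, but yours lands on the explicit constant $1$ in the $O(\cdot)$, whereas the paper accumulates a constant of $10$. The full inclusion--exclusion identity you state is correct but, as you note, unnecessary; the one-term Bonferroni truncation is all that is used. In short, your argument is shorter, slightly sharper, and uses the hypothesis more lightly; the paper's greedy construction buys nothing extra here, though it is the kind of argument that generalizes more readily when one wants lower bounds under weaker degree or edge-count assumptions.
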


\begin{proof}
  Represent $G:= K_{m,m}\setminus H$ as $G = (A\cup B, E)$,  
  and label the vertices of $A=\{v_1,\ldots,v_{m}\}$ in such a way that all the $q\leq m/4$ vertices in $A$, which are not isolated in $H$, are labeled as $v_1, v_2, \ldots, v_q$. For each $1\leq i\leq q$, let $d_i:=d_H(v_i)$. 
  
  We will construct perfect matchings of $G$ by manually pairing each vertex in $A$ with a vertex in $B$. For this, note that there are at least $m-d_1$ ways to choose a vertex in $B$ to pair with $v_1$.  Having chosen such a vertex, there are at least $m-d_2-1$ ways to choose a vertex in $B$, different from the one chosen in the previous step, to pair with $v_2$. In general, for $1\leq i \leq q$, there are at least $m-d_i-(i-1)$ ways to choose a vertex in $B$, different from the ones chosen in the first $i-1$ steps, to pair with $v_i$. Having matched the first $q$ vertices, note that all of the remaining vertices in $A$ have edges (in $G$) to all of the vertices in $B$ and hence, the number of ways in which we can find a vertex in $B$ to pair with $v_i$ for $i>q$ is exactly $m-(i-1)$. Since each sequence of choices gives a different perfect matching, it follows that the number of perfect matchings in $G$ obtained in this manner is at least 
  \begin{align*} \left(\prod_{i=1}^q \left(m-d_i-(i-1)\right)\right) \prod_{i=q+1}^m \left(m-(i-1)\right)
&=m!\cdot \prod_{i=1}^q \frac{m-d_i-i+1}{m-i+1}\\
&=m! \cdot \prod_{i=1}^q \left(1-\frac{d_i}{m-i+1}\right)\\
&\geq m! \left(1- \sum_{i=1}^q \frac{d_i}{m-i+1}\right)\\
&\geq m!  \left(1- \sum_{i=1}^q \frac{d_i}{m-q}\right)\\
&= m! \left(1-\frac{|E(H)|}{m-q}\right),
\end{align*}
where the third line uses the elementary inequality $\prod_{i=1}^{n} (1-x_i)\geq 1-\sum_{i=1}^{n} x_i$, valid for $x_1,\dots,x_n \geq 0$. Next, using the numerical inequality $(1-x)^{-1} \leq 1+2x$, valid for $x \in [0,1/2]$, we have
\begin{align*}
m! \left(1-\frac{|E(H)|}{m-q}\right)
&\geq m! \left(1- \frac{|E(H)|}{m}\cdot\left(1+\frac{2q}{m}\right)\right)\\
&= m!\left(1-\frac{|E(H)|}{m}\right)\left(1-\frac{2q|E(H)|}{m(m-|E(H)|)}\right)\\
&\geq m!\left(1-\frac{|E(H)|}{m}\right)\left(1-3\frac{|E(H)|^2}{m^2}\right),
\end{align*}
where the last equality uses that $q \leq |E(H)| < m/4$. 
Finally, using the numerical inequality $1-x \geq e^{-x}(1-x^2)$, valid for 
$x \in [0,1]$, we can bound the right hand side from below by
\begin{align*}
m!\cdot e^{-|E(H)|/m} \left(1- \frac{|E(H)|^2}{m^2}\right)\left(1-3\frac{|E(H)|^2}{m^2}\right) &\geq m!\cdot e^{-|E(H)|/m} \left(1-10\frac{|E(H)|^2}{m^2}\right)\\
&=\left(1-O\left(\frac{|E(H)|^2}{m^2}\right)\right)m!\cdot e^{-|E(H)|/m}.
\qedhere 
\end{align*}
\end{proof}

\subsection{The number of Hamilton cycles obtained by our procedure}
\label{subsec:counting}

In this section, we present the key step in the proof of our main theorem -- a near-optimal estimate on the number of Hamilton cycles in a graph $G$ obtained by deleting $i<k$ edge-disjoint Hamilton cycles from $K_n$. Specifically, we prove the following lemma: 

\begin{lemma}\label{lemma:main}

Let $k = o(n^{1/100})$, and let  $H_1,\ldots,H_k$ be i.i.d. random variables, each of which outputs a Hamilton cycle of $K_n$, chosen uniformly at random. For each $1\leq i\leq k$, let $\mathcal E_i$ be the event ``$E(H_i)\cap \left(\cup_{j<i}E(H_j)\right)=\emptyset$" i.e. no edge of $H_i$ is present in $H_j$, for any $j < i$. Then, for every $0\leq i\leq k-1$
  \begin{align*} \Pr\left[\mathcal E_{i+1} \mid \mathcal E_i\cdots \mathcal E_1\right]= \exp\left(\pm O\left(\frac{1}{k^{2}\log^{4}n}\right)\right)\cdot\exp(-2i).
\end{align*}
\end{lemma}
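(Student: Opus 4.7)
The plan is to use Bayes' rule to reduce the probability statement to a deterministic counting statement. Since $H_{i+1}$ is independent of $H_1,\ldots,H_i$ and remains uniform over $\mathcal{H}(K_n)$ (the set of Hamilton cycles of $K_n$) after conditioning on $\mathcal{E}_1,\ldots,\mathcal{E}_i$,
\[
\Pr[\mathcal{E}_{i+1}\mid \mathcal{E}_i\cdots\mathcal{E}_1] \;=\; \mathbb{E}\!\left[\frac{|\mathcal{H}(G)|}{(n-1)!/2}\;\middle|\;\mathcal{E}_i\cdots\mathcal{E}_1\right], \qquad G:=K_n\setminus\bigcup_{j\leq i}E(H_j).
\]
It therefore suffices to prove the deterministic statement that for \emph{every} graph $G^{\ast}$ obtained from $K_n$ by removing $i$ edge-disjoint Hamilton cycles,
\[
|\mathcal{H}(G^{\ast})|=\bigl(1\pm O(1/(k^2\log^4 n))\bigr)\cdot e^{-2i}\cdot \tfrac{(n-1)!}{2}.
\]

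\textbf{Applying the procedure; good partitions.} Fix such a $G^{\ast}$ and apply the procedure of Section~\ref{procedure} with $\ell$ chosen polynomially in $k$ and $\log n$, say $\ell=k^{6}\log^{6}n$ (which is $o(\sqrt n)$ under the hypothesis on $k$). By the observation preceding Section~\ref{subsec:counting}, $|\mathcal{H}(G^{\ast})|=|\mathcal{H}_{r,o}(\ell)|/(2n)$, so the task reduces to estimating $|\mathcal{H}_{r,o}(\ell)|$. Set $F:=H_1\cup\cdots\cup H_i$ and call an ordered partition $\Pi$ of $[n]$ \emph{good} if for every $j\in[\ell]$ the number of $F$-edges in $B_j^{\Pi}$ lies within $(1\pm\delta)\cdot 2in/\ell^{2}$ of its expectation for some $\delta=o(1/(k^2\log^4 n))$. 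Viewing a uniformly random ordered partition as a uniformly random permutation of $[n]$, Theorem~\ref{Azuma for permutations} applied with Lipschitz constant $u=O(k)$ (since each vertex has $F$-degree at most $2i\leq 2k$), together with a union bound over $\ell$ coordinates, shows bad partitions form an $\exp(-n^{\Omega(1)})$ fraction. Against the trivial upper bound $(t_c!)^{\ell}$ on matching systems per partition, bad partitions contribute a negligible amount to $|\mathcal{H}_{r,o}(\ell)|$.

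\textbf{Matching estimates.} For a good partition and each $j\leq \ell-1$, Corollary~\ref{Bregman:cor} (with $m\approx n/\ell$, $D=O(k)$, $|E(H)|\approx 2in/\ell^{2}$) and Lemma~\ref{lemma:lower bound on perfect matchings} jointly give
\[
|\mathcal{M}(B_j)|=\bigl(1\pm O(k^{2}/\ell^{2})\pm O(k\ell\log n/n)\bigr)\cdot m!\cdot e^{-2i/\ell};
\]
the hypothesis $|E(H)|<m/4$ of the lower bound lemma holds since $2in/\ell^{2}\ll n/(4\ell)$ when $\ell\gg k$. Multiplying over the $\ell-1$ factors, the cumulative multiplicative error is $O(k^{2}/\ell)+O(k\ell^{2}\log n/n)$, both comfortably $o(1/(k^{2}\log^{4}n))$ for the chosen $\ell$.

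\textbf{Completion and conclusion.} The main obstacle is Step~5 of the procedure, which demands a \emph{single} Hamilton cycle rather than an arbitrary $2$-factor. Given $(\Pi,v^{\ast},M_1,\ldots,M_{\ell-1})$, the path system $\mathcal{P}$ is determined, and each perfect matching of $B_\ell$ induces a permutation $\sigma$ on the $t_c$ paths; the valid completions correspond to those $\sigma$ which are single $t_c$-cycles. The plan is to show that, for a good partition and typical inner matchings, a uniformly random perfect matching of $B_\ell$ induces a single-cycle permutation with probability $(1+o(1))/t_c$ (analogous to the classical fact that a uniformly random permutation of $[t_c]$ is a single cycle with probability $1/t_c$); the natural tool is a switching argument exploiting that $B_\ell$ misses at most $O(k)$ edges per vertex, allowing one to rearrange short cycles in $\sigma$ and bound their contribution. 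Combined with the matching-count estimate for $B_\ell$, this yields a completion factor of $(1+o(1))\cdot(t_c-1)!\cdot e^{-2i/\ell}$. Multiplying the multinomial partition count, the $t_c$ choices of root, the $\ell-1$ inner matching counts, and the completion count gives
\[
|\mathcal{H}_{r,o}(\ell)|=\bigl(1\pm O(1/(k^{2}\log^{4}n))\bigr)\cdot n!\cdot e^{-2i},
\]
and dividing by $2n$ recovers the desired estimate on $|\mathcal{H}(G^{\ast})|$.
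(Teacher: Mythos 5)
Your overall architecture matches the paper's: reduce to the deterministic claim that every graph obtained from $K_n$ by deleting $i$ edge-disjoint Hamilton cycles has $(1\pm O(1/(k^2\log^4 n)))e^{-2i}(n-1)!/2$ Hamilton cycles, run the procedure of Section~\ref{procedure} with $\ell$ polynomial in $k\log n$, discard bad partitions via McDiarmid against the trivial $(t_c!)^{\ell}$ bound, and estimate the inner matchings with Corollary~\ref{Bregman:cor} and Lemma~\ref{lemma:lower bound on perfect matchings}. All of that is correct and your error accounting (including the verification of $|E(H)|<m/4$ and the cumulative errors $O(k^2/\ell)+O(k\ell^2\log n/n)$) checks out.

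The one genuine gap is your treatment of Step~5. You correctly identify that the completions of a path system correspond to perfect matchings of $B_\ell$ whose induced permutation on the $t_c$ paths is a single $t_c$-cycle, but you then only announce a ``plan'' to show that a uniformly random perfect matching of $B_\ell$ induces a single cycle with probability $(1+o(1))/t_c$ via an unspecified switching argument. That step is never carried out, and as written it is the load-bearing part of the lower bound: without it you cannot rule out that the missing edges of $B_\ell$ conspire to suppress single-cycle matchings disproportionately. Moreover, the switching route is harder than necessary. The paper's fix is elementary: identify the two sides of $B_\ell$ along the path labels to get a digraph on $t_c$ vertices in which the single-cycle completions are exactly the directed Hamilton cycles; the complete digraph has $(t_c-1)!$ of them, each of the $O(kn/\ell^2)$ missing arcs lies on at most $(t_c-2)!$ of them, so the count is sandwiched between $(t_c-1)!(1-O(k/\ell))$ and $(t_c-1)!$, which is already within your error budget. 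Replacing your deferred switching argument with this two-line count closes the gap and makes your write-up essentially coincide with the paper's proof.
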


To prove this lemma, we will analyze the procedure for generating all Hamilton cycles of $G$ given in Section \ref{procedure}. We will need the following two preliminary claims.\\ 

The first claim concerns the number of partitions in Step 1 of Section~\ref{procedure}. 

\begin{claim}
 The number of partitions $\mathcal{V}$ of $[n]$ into $r$ sets $V_1, \ldots, V_r$ of size $t_c$ and $\ell-r$ sets $V_{r+1},\ldots V_\ell$ of size $t_f$, together with a designated vertex $v^{\ast}\in V_{\max\{r,1\}}$ is $\frac{n!\cdot t_c}{(t_c!)^r(t_f!)^{\ell-r}}$.
\end{claim}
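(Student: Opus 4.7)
The plan is to factor the count into two independent choices and identify each with a standard elementary formula. First I would choose the ordered tuple of blocks $(V_1, \ldots, V_\ell)$ with the prescribed sizes, and then, having fixed these blocks, I would choose the designated vertex $v^*$ in the appropriate block.

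For the first step, since the blocks are labeled by their index, the number of ordered partitions of $[n]$ with $r$ blocks of size $t_c$ (namely $V_1, \ldots, V_r$) followed by $\ell - r$ blocks of size $t_f$ (namely $V_{r+1}, \ldots, V_\ell$) is simply the multinomial coefficient
\[
\binom{n}{\underbrace{t_c, \ldots, t_c}_{r \text{ times}}, \underbrace{t_f, \ldots, t_f}_{\ell - r \text{ times}}} \;=\; \frac{n!}{(t_c!)^{r}\,(t_f!)^{\ell - r}}.
\]

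For the second step, I would verify that the block $V_{\max\{r,1\}}$ from which the root is drawn has size exactly $t_c$ in both regimes. If $r \geq 1$, the root lies in $V_r$, which by definition has size $t_c$, so there are $t_c$ choices. If $r = 0$, the root lies in $V_1$; but $r = 0$ precisely means $\ell \mid n$, and in that case $t_c = \lceil n/\ell \rceil = \lfloor n/\ell \rfloor = t_f$, so $V_1$ again has size $t_c$. In either regime there are exactly $t_c$ choices for $v^*$.

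Multiplying the two independent counts yields the claimed total of $\frac{n!\cdot t_c}{(t_c!)^{r}(t_f!)^{\ell - r}}$. There is no real obstacle here; the only subtlety is the bookkeeping in the $r = 0$ edge case, which is resolved by the observation that $t_c = t_f$ when $\ell \mid n$.
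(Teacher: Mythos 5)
Your proof is correct and follows the same decomposition as the paper's: count ordered partitions by the multinomial coefficient $\frac{n!}{(t_c!)^{r}(t_f!)^{\ell-r}}$, then multiply by $t_c$ choices for the root. Your extra check that $V_{\max\{r,1\}}$ has size $t_c$ even when $r=0$ (since then $t_c=t_f$) is a small point the paper leaves implicit, but the argument is the same.
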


\begin{proof}
Indeed, there are $\frac{n!}{(t_c!)^{r}(t_f!)^{\ell-r}}$ ways of choosing an (ordered) partition with the given sizes, and $t_c$ ways of choosing a designated vertex from $V_{\max\{r,1\}}$. 
\end{proof}

Let $G$ be a graph obtained from $K_n$ by removing $i$ edge-disjoint Hamilton cycles, and fix $\ell \leq \sqrt{n}$. For a partition of $[n]$ into $\ell$ parts as above, let $\{B_j\}_{1\leq j \leq \ell}$ denote the collection of bipartite graphs constructed in Step 3 of Section~\ref{procedure}. We claim that, for most partitions, the number of edges missing from each $B_j$ is close to its expectation (for a uniformly random partition).

\begin{claim} \label{claim: number of edges} 
Let $f_j$ be the number of missing edges in $B_j$. Then, for all sufficiently large $n$, the number of partitions $\mathcal{V}$ of $[n]$ for which $\lvert f_j-\frac{2\cdot i\cdot n}{\ell^2}\rvert \leq 2n^{2/3}$ for every $1\leq j\leq \ell$ is at least
 $$(1-e^{-n^{3/100}})\frac{n!}{(t_c!)^{r}(t_f!)^{\ell-r}}.$$
\end{claim}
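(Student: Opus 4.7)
The plan is to identify a partition of $[n]$ into ordered parts $(V_1,\ldots,V_\ell)$ of the prescribed sizes with a permutation $\pi\in S_n$ (read $V_1=\{\pi(1),\ldots,\pi(t_c)\}$, $V_2$ as the next $t_c$ elements of $\pi$, and so on), so that a uniformly random such partition corresponds to a uniformly random $\pi\in S_n$. I would then apply the symmetric-group concentration inequality (Theorem~\ref{Azuma for permutations}) to each $f_j$ individually and take a union bound over the $\ell$ values of $j$.

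First I would compute $\mathbb{E}[f_j]$. The complement $\overline{G}=K_n\setminus G$ has exactly $in$ edges (the disjoint union of the $i$ removed Hamilton cycles), and for any fixed edge $\{u,v\}$ a uniform partition places $u,v$ into $V_j$ and $V_{j+1}$ (in either order) with probability $2|V_j||V_{j+1}|/(n(n-1))$. Linearity of expectation then gives
$$\mathbb{E}[f_j] \;=\; \frac{2in|V_j||V_{j+1}|}{n(n-1)} \;=\; \frac{2in}{\ell^2}+O\!\left(\frac{i}{\ell}\right),$$
using $|V_j|,|V_{j+1}|=n/\ell+O(1)$. Since $i\leq k=o(n^{1/100})$, the $O(i/\ell)$ error is $o(n^{2/3})$, so it will be enough to prove the one-sided concentration statement $\Pr[|f_j-\mathbb{E}[f_j]|>n^{2/3}]\leq e^{-n^{3/100}}/\ell$.

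The key technical step is identifying the Lipschitz constant of $f_j$ under a transposition. A transposition changes the part-assignment of at most two vertices $x,y$, and the only missing edges whose contribution to $f_j$ can be affected are those incident to $x$ or to $y$ (the edge $\{x,y\}$ itself, if missing, still straddles the same pair of parts after the swap). Since the $i$ removed Hamilton cycles are edge-disjoint, each vertex is incident to at most $2i$ missing edges, so the Lipschitz constant of $f_j$ is at most $u=4i$. McDiarmid's inequality then gives
$$\Pr\!\left[|f_j-\mathbb{E}[f_j]|>n^{2/3}\right]\;\leq\; 2\exp\!\left(-\frac{2n^{4/3}}{n(4i)^{2}}\right)\;=\;2\exp\!\left(-\frac{n^{1/3}}{8i^{2}}\right).$$
Using $i^2=o(n^{1/50})$ and $\ell\leq\sqrt n$, the exponent is $\omega(n^{3/100}+\log \ell)$, so union-bounding over the $\ell$ values of $j$ yields the claimed bound for all sufficiently large $n$.

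The main obstacle, such as it is, lies in correctly pinning down the Lipschitz constant $u=O(i)$, which is where the edge-disjointness of the removed Hamilton cycles enters decisively — without this structural input, one would only get $u=O(n)$, and the argument would break. One minor bookkeeping point to dispose of is that the definitions of $B_r$ and $B_\ell$ in Step 3 involve the distinguished vertex $v^*$; however, moving a single vertex between two bipartite graphs can alter each $f_j$ by at most $2i\ll n^{2/3}$, so this deviation is harmlessly absorbed into the error, and we may reason with the cleaner quantity $f_j=|E(K_n[V_j,V_{j+1}])\setminus E(G[V_j,V_{j+1}])|$ throughout.
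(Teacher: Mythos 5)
Your proof is correct and follows essentially the same route as the paper's: identify ordered partitions with uniformly random permutations, compute $\mathbb{E}[f_j]$ by linearity of expectation, apply Theorem~\ref{Azuma for permutations} with a Lipschitz constant of order $i$, and union bound over the $\ell$ values of $j$ before converting back from permutations to partitions. Your Lipschitz constant $u=4i$ is in fact the more careful bound (the paper asserts $2i$, which can fail when the two swapped vertices lie in $V_j$ and $V_{j+1}$ respectively), but the constant factor is immaterial to the exponent, and your explicit handling of the $v^{\ast}$ adjustment is a harmless extra precaution.
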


\begin{proof}
 Let $\sigma\in S_n$ be a uniformly random permutation of $[n]$. Let $V_1$ be the image of $[1,|V_1|]$ under $\sigma$, $V_2$ be the image of the next $|V_2|$ elements in $[n]$ under $\sigma$, and so on. 
For $1\leq j\leq \ell$, let $f_j$ be the number of missing edges in $B_j$, and observe that
$$\mu_j:= \mathbb{E}f_{j} = \frac{2\cdot i\cdot n}{\ell^{2}} \pm O\left(\frac{i}{\ell}\right) = \frac{2\cdot i\cdot n}{\ell^{2}} \pm o(n^{2/3}),$$
where the final inequality uses the assumption $i\leq k = o(n^{1/100})$.
Next, since $\cup_{j\leq i}E(H_i)$ is a $2i$-regular graph, it follows that a single transposition of $\sigma$ can change $f_j$ by at most $2i$.    
 Therefore, by Theorem \ref{Azuma for permutations}, 
 $$\Pr[|f_j-\mu_j|\leq n^{2/3}]\leq 2e^{-2n^{4/3}/(4\cdot n\cdot i^{2})}=o(e^{-n^{3/100}}/\ell),$$
 where the final inequality uses the assumption $i\leq k = o(n^{1/100})$. Applying the union bound for $1\leq j \leq \ell$ shows that the number of permutations giving rise to `good partitions' (i.e. those satisfying the assumptions of the lemma) is at least $\left(1-e^{-n^{3/100}}\right)n!$. Finally, since each partition corresponds to $(t_c !)^{r} (t_f !)^{\ell - r}$ distinct permutations, we get the desired conclusion. \qedhere
 
 
\end{proof}

With these two claims in hand, we can prove Lemma \ref{lemma:main}.

\begin{proof}[Proof of Lemma \ref{lemma:main}]
  Let $H_1,\ldots, H_i$ be any edge-disjoint Hamilton cycles in $K_n$, and let $G$ be the graph obtained from $K_n$ by removing $\cup _{j \leq i}E(H_j)$. We wish to count the number of Hamilton cycles in $G$, and we will do so by analyzing the procedure in Section~\ref{procedure}. 
  For the rest of this proof, we fix  $\ell=\lceil (k\log n)^4\rceil = o(n^{1/20})$.
  
  First, note that the number of Hamilton cycles in $G$ obtained by our procedure, starting from a partition in Step 1 which does not satisfy the conclusion of Claim~\ref{claim: number of edges} is negligible for our purposes. Indeed, by Claim~\ref{claim: number of edges}, the number of such partitions is at most $e^{-n^{3/100}}\cdot n!/((t_c !)^{r}\cdot (t_f !)^{\ell - r})$, and once we fix such a partition, the number of choices available in Steps 2-5 is at most $t\cdot (t_c !)^{r}\cdot  (t_f!)^{\ell - r}$. Hence, the number of Hamilton cycles that can be obtained in this manner is at most $t\cdot n!\cdot e^{-n^{3/100}} = o(n!\cdot e^{-2i}/\text{poly}(n))$, for $i\leq k= o(n^{1/100})$. Therefore, it suffices to analyze the contribution of partitions satisfying the conclusion of Claim~\ref{claim: number of edges}. 
  
  For any such partition $V = V_1 \cup \dots \cup V_{\ell}$, there are exactly $t_c$ choices in Step 2.
  
  Moreover, for each such realisation of Steps 1-3, by Corollary~\ref{Bregman:cor} and Claim~\ref{claim: number of edges}, the number of perfect matchings of $B_j$ is at most either (depending on the value of $j$)
  $$t_c!\cdot\exp\left(-\frac{2\cdot i}{\ell}\right)\cdot \exp(O(\ell\cdot n^{-1/3})),$$
  or the same expression with $t_c$ replaced by $t_f$. Similarly, by Lemma~\ref{lemma:lower bound on perfect matchings} and Claim~\ref{claim: number of edges}, the number of perfect matchings of $B_j$ is at least either (depending on the value of $j$)
  $$t_c! \cdot \exp\left(-\frac{2\cdot i}{\ell}\right)\cdot \exp\left(-O\left(\frac{k^{2}}{\ell^{2}}\right)\right),$$
  or the same expression with $t_c$ replaced by $t_f$. 
  

 
Since there are $r-1$ values of $j$ for which the above bounds hold with $t_c$, and $\ell - r$ values of $j$ for which the above bounds hold with $t_f$, and since $k^{2}/\ell^{2} \gg \ell \cdot n^{-1/3}$, it follows that the number of collection of paths that can be obtained at the end of Step 4 is
$$(t_c!)^{r-1}\cdot(t_f!)^{\ell-r}\cdot \exp\left(-2i\right)\cdot\exp\left(\pm O\left(\frac{k^2}{\ell}\right)\right). $$
     
 
Finally, let us estimate the number of ways to extend any such collection of paths into a Hamilton cycle in Step 5. For this, we arbitrarily label the collection of paths obtained at the end of Step 4 by $1,\dots,t_{c}$. This induces a natural labeling (given by which path the vertex participates in) of each part of the bipartite graph $B_\ell$ by the labels $1,\dots,t_{c}$. The obtained labelled bipartite graph may be viewed as a directed graph (possibly with self loops) on $t_c$ vertices as follows: identify vertices with the same label, and orient edges from the first part of $B_\ell$ to its second part. Observe that the the number of extensions available in Step 5 correspond precisely to the number of oriented Hamilton cycles of this directed graph. 

Since the complete directed graph on $t_c$ vertices has at most $(t_c-1)!$ oriented Hamilton cycles, it follows that there are at most $(t_c-1)!$ such extensions. For a nearly matching lower bound, we begin by noting that the number of oriented Hamilton cycles in the complete directed graph on $t_c$ vertices containing a specific edge is at most $(t_c - 2)!$. Since, by Claim~\ref{claim: number of edges} the directed graph corresponding to $B_\ell$ has at most $O(k\cdot n/\ell^{2})$ missing edges, it follows that the number of oriented Hamilton cycles in this directed graph is at least
$$(t_c - 1)! - O\left(\frac{k\cdot n}{\ell^2}\right)(t_c - 2)! = (t_c - 1)!\left(1 - O\left(\frac{k}{\ell}\right)\right).$$

To summarize, we have shown that the number of choices available in Steps 2-5, for any fixed choice of partition in Step 1 which satisfies the conclusion of Claim~\ref{claim: number of edges} is
\begin{align*}
    t_c \cdot (t_c!)^{r-1}\cdot (t_f!)^{\ell - r} \cdot \exp\left(-2i\right)\cdot (t_c-1)!\exp\left(\pm O\left(\frac{k^{2}}{\ell}\right)\right) = (t_c!)^{r}\cdot(t_f!)^{\ell - r}\cdot \exp(-2i)\cdot\exp\left(\pm O \left(\frac{k^{2}}{\ell}\right)\right).
\end{align*}
Combining this with the number of choices in Step 1, as given by Claim~\ref{claim: number of edges}, we see that the contribution to the number of oriented, rooted Hamilton cycles from such partitions is
$$n! \cdot \exp(-2i) \cdot \exp\left(\pm O\left(\frac{k^2}{\ell}\right)\right) = 2n \cdot \frac{(n-1)!}{2}\cdot \exp(-2i)\cdot \exp\left(\pm O\left(\frac{1}{k^{2}\log^{4}n}\right)\right)$$

To conclude, recall that every (undirected, unrooted) Hamilton cycle is counted exactly $2n$ times by our procedure, and that $K_n$ has $(n-1)!/2$ Hamilton cycles. \qedhere 

\end{proof}

\section{Proof of the main theorem}
\label{sec:proof-main}

Now we are ready to prove Theorem \ref{lemma:edge disjoint ham cycles}.

\begin{proof}[Proof of Theorem~\ref{lemma:edge disjoint ham cycles}]

The proof is a straightforward application of Lemma \ref{lemma:main}.
Indeed, we wish to find the probability that all $k$ of the chosen Hamiltonian cycles are edge disjoint, which is 

\begin{align*}\Pr\left[\mathcal E_1\mathcal E_2\cdots \mathcal E_k\right] 
&=\Pr\left[\mathcal E_1\right]\Pr\left[\mathcal E_2\mid \mathcal E_1\right]\dots\Pr\left[ \mathcal E_k\mid \mathcal E_{k-1}\dots \mathcal E_2\mathcal E_1\right]\nonumber \\
&= \prod_{i=0}^{k-1}\Pr\left[\mathcal E_{i+1}\mid \mathcal E_i\cdots \mathcal E_1\right]\nonumber\\
&= \prod_{i=0}^{k-1}\exp\left(\pm O\left(\frac{1}{k^{2}\log^{4}n}\right)\right)\cdot \exp(-2i)\\ &=
\exp\left(\pm O\left(\frac{1}{k\log^{4}n}\right)\right)\exp\left(-2\sum_{i=0}^{k-1} i\right)\\
&=(1+o(1))e^{-2\binom{k}{2}},
\end{align*}
where the third line uses Lemma~\ref{lemma:main}. \qedhere

\end{proof}

\end{document}